\titleformat{\section}{\bfseries}{\thesection}{12pt}{\uppercase}
\titlespacing*{\section}{0pt}{12pt}{*0}
\titleformat{\subsection}{\bfseries}{\thesubsection}{12pt}{}
\titlespacing*{\subsection}{0pt}{12pt}{*0}
\titleformat{\subsubsection}{\itshape}{}{0pt}{}
\titlespacing*{\subsubsection}{0pt}{12pt}{*0}
\setlist[1]{labelindent=0.5in,leftmargin=*}
\setlist[2]{labelindent=0in,leftmargin=*}
\setlist{nosep} 
\renewcommand{\fnum@figure}{\textbf{FIGURE~\thefigure} }
\renewcommand{\fnum@table}{\textbf{TABLE~\thetable} }
\renewcommand{\cite}[1]{({\it \citenum{#1}})}
\newread\somefile
\newcounter{totalwordcounter}
\newcounter{wordcounter}
\NewDocumentCommand{\wordcount}{s}{%
  \immediate\write18{texcount -sum -1 \jobname.tex > count.txt}%
  \immediate\openin\somefile=count.txt%
  \read\somefile to \@@localdummy%
  \immediate\closein\somefile%
  \setcounter{wordcounter}{\@@localdummy}%
  \IfBooleanF{#1}{%
  \@@localdummy
  }%
}
\newcommand{\wordfigure}{250} 
\newcommand{\wordtable}{250} 
\newcommand{\totalwordcount}{%
  \wordcount*
  \setcounter{totalwordcounter}{\value{wordcounter}}%
  \addtocounter{totalwordcounter}{\numexpr\wordfigure*\totvalue{figure}}%
  \addtocounter{totalwordcounter}{\numexpr\wordtable*\totvalue{table}} %
  \number\value{totalwordcounter}
  \renewcommand{\totalwordcount}{\number\value{totalwordcounter}}
}
\def\BState{\State\hskip-\ALG@thistlm}
\begin{document}
\Sconcordance{concordance:trb_template.tex:trb_template.Rnw:%
1 371 1}

\thispagestyle{empty}

\newtheorem{dfn}{Definition}
\newtheorem{thm}{Theorem}
\newtheorem{exm}{Example}
\newtheorem{prp}{Proposition}
\newtheorem{rem}{Remark}
\newtheorem{lmm}{Lemma}
\newtheorem{asm}{Assumption}
\newtheorem{clm}{Claim}
\newtheorem{cor}{Corollary}
\newtheorem{cnd}{Condition}
%
\newcommand{\Dens}{\rho} 
\newcommand{\dens}{\rho} 
\newcommand{\Densn}{n} 
\newcommand{\densn}{n} 
\newcommand{\jmdens}{\bar{\rho}} 
\newcommand{\jmdensn}{\bar{n}} 
\newcommand{\ffspd}{v} 
\newcommand{\flwspd}{u} 
\newcommand{\conspd}{w} 
\newcommand{\flow}{\mathsf{f}} 
\newcommand{\Flow}{f} 
\newcommand{\flowfn}{G} 
\newcommand{\Flowfn}{\Theta} 
\newcommand{\redfac}{\alpha} 
\newcommand{\Redfac}{\mathcal{A}} 
\newcommand{\splt}{\beta} 
\newcommand{\capact}{\mathsf{F}} 
\newcommand{\sfF}{\mathsf{F}} 
\newcommand{\sfS}{\mathsf{S}} 
\newcommand{\sfR}{\mathsf{R}} 
\newcommand{\Capact}{\mathcal{F}} 
\newcommand{\oricap}{\bar{f}^0} 
\newcommand{\inflow}{r} 
\newcommand{\Inflow}{r} 
\newcommand{\outflw}{s} 
\newcommand{\sent}{S} 
\newcommand{\recv}{R} 
\newcommand{\maxflw}{M} 
\newcommand{\ttt}{\mathsf{VHT}} 
\newcommand{\vmt}{\mathsf{VMT}} 
\newcommand{\vht}{\mathsf{VHT}} 
\newcommand{\Densrv}{\rho} 
\newcommand{\CS}{X} 
\newcommand{\ctmdim}{N} 
\newcommand{\fftn}{\hat{f}} 
\newcommand{\eqflow}{f} 
\newcommand{\Eqflow}{f} 
\newcommand{\Eqdens}{z} 
\newcommand{\eqdens}{z} 
\newcommand{\PL}{\mathsf{PL}} 
\newcommand{\pl}{\mathsf{pl}} 
\newcommand{\eqspd}{u^*} 
\newcommand{\nol}{z} 
\newcommand{\loc}{l} 
\newcommand{\denied}{\Delta} 
\newcommand{\botind}{I} 
\newcommand{\incrt}{\mu} 

\newcommand{\shs}{\{X(t),t\ge0\}} 
\newcommand{\pdmp}{\{(\Dismod(t),\Densrv(t));\;t\ge0\}} 
\newcommand{\pdmpZ}{\{Z_t;\;t\ge0\}} 
\newcommand{\chain}{\{\tilde Z_n,n\in\mathbb N_+\}} 
\newcommand{\hybrsp}{\mathscr{E}} 
\newcommand{\contsp}{\Rho} 
\newcommand{\condom}{\mathcal{E}} 
\newcommand{\disdom}{\mathcal{E}} 
\newcommand{\jumprt}{\lambda} 
\newcommand{\tranrt}{q} 
\newcommand{\clrrt}{\mu} 
\newcommand{\incmod}{\delta} 
\newcommand{\Incmod}{\Delta} 
\newcommand{\dimim}{|M|} 
\newcommand{\Incmrv}{\sigma} 
\newcommand{\incmsp}{M} 
\newcommand{\conmod}{k} 
\newcommand{\dimcm}{K^j} 
\newcommand{\Conmd}{\mathcal K} 
\newcommand{\Conmod}{\mathcal K^{m}} 
\newcommand{\deccon}{\tilde k} 
\newcommand{\Deccon}{\tilde{K}} 
\newcommand{\dimdc}{K^*} 
\newcommand{\CDF}{\mathsf{F}} 
\newcommand{\transi}{\mathcal{P}} 
\newcommand{\vecfld}{\mathscr{X}} 
\newcommand{\jmptim}{T} 
\newcommand{\tratim}{\tilde T} 
\newcommand{\jtsp}{t} 
\newcommand{\epoch}{S} 
\newcommand{\frstjt}{T_1} 
\newcommand{\Jmptim}{\mathcal{T}} 
\newcommand{\orbit}{\phi} 
\newcommand{\trajpc}{\phi} 
\newcommand{\traj}{\phi} 
\newcommand{\trajcc}{\Phi} 
\newcommand{\incide}{I} 
\newcommand{\ftt}{t^*} 
\newcommand{\intcrv}{\phi} 
\newcommand{\dismod}{\nu} 
\newcommand{\Dismod}{V} 
\newcommand{\dimdm}{\mathcal N} 
\newcommand{\discsp}{\mathcal{V}} 
\newcommand{\bndr}{\partial\mathcal{E}} 
\newcommand{\dmseq}{\boldsymbol{\nu}} 
\newcommand{\tranms}{\mathscr{Q}} 
\newcommand{\tranpr}{\mathcal{Q}} 
\newcommand{\genrtr}{\mathscr{A}} 
\newcommand{\matA}{\mathcal{A}} 
\newcommand{\matB}{\mathcal{B}} 
\newcommand{\matC}{\mathcal{C}} 
\newcommand{\Bndr}{\Gamma} 
\newcommand{\redcon}{\mathsf{a}} 
\newcommand{\rfset}{\mathcal{A}} 
\newcommand{\trset}{\Lambda} 
\newcommand{\spind}{n} 
\newcommand{\ASet}{\Gamma} 
\newcommand{\ittsp}{t} 
\newcommand{\epsp}{t} 
\newcommand{\Tseq}{\mathbb{T}} 
\newcommand{\tseq}{\mathcal{t}} 
\newcommand{\Jumprt}{\Lambda} 
\newcommand{\pstvtr}{\gamma^+} 
\newcommand{\hyprec}{\mathcal{H}} 
\newcommand{\imtup}{(\lambda,\alpha)} 
\newcommand{\Imtup}{(\Lambda,A)} 
\newcommand{\borel}{\mathcal{B}} 
\newcommand{\dk}{\mathcal{H}} 
\newcommand{\sk}{\mathcal{J}} 
\newcommand{\mk}{\mathcal{G}} 
\newcommand{\CSD}{\Theta} 
\newcommand{\hyb}{Z} 
\newcommand{\hybt}{\tilde Z} 
\newcommand{\Denst}{\tilde X} 
\newcommand{\incmdt}{\tilde m} 
\newcommand{\semgrp}{P_t} 
\newcommand{\semgrt}{\tilde P} 
\newcommand{\Kt}{K_t} 
\newcommand{\Ktld}{\tilde K} 

\newcommand{\real}{\mathbb{R}} 
\newcommand{\natur}{\mathbb{N}} 
\newcommand{\integr}{\mathbb{Z}_+} 
\newcommand{\probab}{\mathsf{Pr}} 
\newcommand{\E}{\mathsf{E}} 
\newcommand{\sampsp}{\Omega} 
\newcommand{\samppt}{\omega} 
\newcommand{\ptran}{P} 
\newcommand{\ftn}{g} 
\newcommand{\stomat}{\mathcal{P}} 
\newcommand{\simtim}{t_{\mbox{sim}}} 
\newcommand{\indica}{\mathbb{I}} 
\newcommand{\sfi}{\mathsf{i}} 
\newcommand{\sfe}{\mathsf{e}} 
\newcommand{\sfj}{\mathsf{j}} 
\newcommand{\sfk}{\mathsf{k}} 
\newcommand{\dt}{\delta} 
\newcommand{\invms}{\mu} 
\newcommand{\ones}{\mathcal{e}} 
\newcommand{\ys}{\xi} 
\newcommand{\ns}{n^2} 
\newcommand{\resil}{R} 
\newcommand{\degra}{D} 
\newcommand{\cost}{C} 
\newcommand{\argmin}{\mathrm{arg}\min} 
\newcommand{\kernl}{\mathscr{K}_t} 
\newcommand{\kernt}{\tilde {\mathscr K}} 
\newcommand{\prbms}{\mathcal{P}} 
\newcommand{\conftn}{\mathcal{C}_0} 
\newcommand{\ball}{\mathcal{B}}
\newcommand{\supp}{\mathsf{supp}}
\newcommand{\la}{\langle}
\newcommand{\ra}{\rangle}
\newcommand{\lb}{\left(}
\newcommand{\rb}{\right)}
\newcommand{\lc}{\left\{}
\newcommand{\rc}{\right\}}
\newcommand{\nom}{0}
\newcommand{\unc}{{\mathrm{unc}}}
\newcommand{\con}{{\mathrm{con}}}


\newcommand{\bH}{\mathcal{H}}
\newcommand{\I}{\mathcal{I}}
\newcommand{\T}{\mathcal{T}}
\newcommand{\bP}{\mathcal{P}}
\newcommand{\Q}{\mathcal{Q}}
\newcommand{\Ou}{\mathcal{O}}
\newcommand{\W}{\mathcal{W}}
\newcommand{\X}{\mathcal{X}}
\newcommand{\x}{\mathcal{x}}
\newcommand{\y}{\mathcal{y}}
\newcommand{\Y}{\mathcal{Y}}
\newcommand{\Z}{\mathbb{Z}}
\newcommand{\A}{\mathcal{A}}
\newcommand{\B}{\mathcal{B}}
\newcommand{\C}{\mathcal{C}}
\newcommand{\D}{\mathcal{D}}
\newcommand{\Bbar}{\overline{\mathcal{B}}}
\newcommand{\N}{\mathcal{N}}
\newcommand{\R}{\mathcal{R}}
\newcommand{\bd}{\mathcal{d}}
\newcommand{\ubar}{\overline u}
\newcommand{\bfu}{\mathbf{u}}
\newcommand{\bs}{\mathcal{s}}
\newcommand{\br}{\mathcal{r}}
\newcommand{\bu}{\mathcal{u}}
\newcommand{\bp}{\mathcal{p}}
\newcommand{\bh}{\mathcal{h}}
\newcommand{\bff}{\mathbf{f}}
\newcommand{\e}{\mathsf{e}}
\newcommand{\ej}{\mathsf{e}}
\newcommand{\bc}{\mathcal{c}}
\newcommand{\bb}{\mathcal{b}}
\newcommand{\ba}{\mathcal{a}}
\newcommand{\bbeta}{\boldsymbol{\beta}}
\newcommand{\0}{\mathcal{0}}
\newcommand{\Polyh}{\mathcal P}
\newcommand{\bfs}{\tilde{\mathcal x}}
\newcommand{\st}{\mathrm{s.t.}}
\newcommand{\edge}{\mathcal E}
\newcommand{\node}{\mathcal N}
\newcommand{\graph}{\mathcal G}
\newcommand{\Lagr}{\mathcal L}
\newcommand{\Jbar}{{\overline J}}
\newcommand{\Qbar}{{\overline Q}}
\newcommand{\Rho}{{\mathrm P }}
\newcommand{\U}{{\mathcal U}}
\newcommand{\F}{{\mathcal F}}
\newcommand{\fbar}{{\overline f}}
\newcommand{\rbar}{{\overline r}}
\newcommand{\cbar}{\overline c}
\newcommand{\dbar}{\overline d}
\newcommand{\qbar}{\overline q}
\newcommand{\sbar}{\overline s}
\newcommand{\tbar}{\overline t}
\newcommand{\wbar}{\overline w}
\newcommand{\sfp}{\mathsf{p}}
\newcommand{\sfr}{\mathsf{r}}
\newcommand{\sfs}{\mathsf{s}}
\newcommand{\sft}{\mathsf{t}}
\newcommand{\rhobar}{\overline\rho}
\newcommand{\Beta}{\mathrm{B}}
\newcommand{\Alpha}{\mathrm{A}}
\newcommand{\diag}{\mathrm{diag}}
\newcommand{\Ell}{\mathcal{L}}
\newcommand{\Mu}{\mathrm{M}}
\newcommand{\net}{\mathcal{T}}
\newcommand{\cell}{\mathcal{C}}
\newcommand{\arc}{\mathcal{A}}
\newcommand{\into}{{\mathrm{in}}}
\newcommand{\out}{{\mathrm{out}}}
\newcommand{\bfF}{\mathbf{F}}
\newcommand{\bfE}{\mathbf{E}}
\newcommand{\bfD}{\mathbf{D}}
\newcommand{\bfI}{\mathbf{I}}
\newcommand{\bfLmd}{\boldsymbol{\Lambda}}
\newcommand{\bfg}{\mathbf{g}}
\newcommand{\bfd}{\mathbf{d}}
\newcommand{\bfb}{\mathbf{b}}
\newcommand{\bfr}{\mathbf{r}}
\newcommand{\bfe}{\mathbf{e}}
\newcommand{\bfk}{\mathbf{k}}
\newcommand{\bfv}{\mathbf{v}}
\newcommand{\bfq}{\mathbf{q}}
\newcommand{\bfp}{\mathbf{p}}
\newcommand{\bfU}{\mathbf{U}}
\newcommand{\bfR}{\mathbf{R}}
\newcommand{\bfV}{\mathbf{V}}
\newcommand{\bfJ}{\mathbf{J}}
\newcommand{\bfA}{\mathbf{A}}
\newcommand{\bfB}{\mathbf{B}}
\newcommand{\bfS}{\mathbf{S}}
\newcommand{\bfpi}{\boldsymbol{\pi}}
\newcommand{\bfPhi}{\boldsymbol{\Phi}}
\newcommand{\bfphi}{\boldsymbol{\phi}}
\newcommand{\bfGmm}{\boldsymbol{\Gamma}}
\newcommand{\sfz}{\mathsf{z}}
\newcommand{\bfz}{\mathbf{z}}
\newcommand{\bfx}{\mathbf{x}}
\newcommand{\bfy}{\mathbf{y}}
\newcommand{\sff}{\mathsf{f}}
\newcommand{\sgn}{\mathrm{sgn}}
\newcommand{\Init}{\I\times\Q}
\newcommand{\bfpsi}{\boldsymbol{\psi}}
\newcommand{\bfPsi}{\boldsymbol{\Psi}}
\newcommand{\bfTht}{\boldsymbol{\Theta}}
\newcommand{\bfEta}{\mathbf{H}}
\newcommand{\Eta}{\mathrm{H}}
\newcommand{\K}{\mathcal{K}}


\begin{titlepage}
\begin{flushleft}

{\MakeUppercase{\bfseries Analyzing a Tandem Fluid Queueing Model with Stochastic Capacity and Spillback}}\\[36pt]

{\bfseries Li Jin} \\
Tandon School of Engineering\\
New York University\\
15 MetroTech Center \\
Brooklyn, NY 11201 \\
lijin@nyu.edu\\[12pt]

{\bfseries Saurabh Amin}\\
Department of Civil and Environmental Engineering\\
Massachusetts Institute of Technology \\
77 Massachusetts Avenue \\
Cambridge, MA 02139 \\
amins@mit.edu\\[60pt]

Word Count: \wordcount words + \total{figure} figure(s) + \total{table} table(s) = \totalwordcount~words\\[12pt]

Submission Date: \today
\end{flushleft}
\end{titlepage}

\newpage
\newpage
\section*{Abstract}
The tandem fluid queueing model is a useful tool for performance analysis and control design for a variety of transportation systems.
In this article, we study the joint impact of stochastic capacity and spillback on the long-time properties of this model.
Our analysis focuses on the system of two fluid queueing links in series. 
The upstream link has a constant capacity (saturation rate) and an infinite buffer size.
The downstream link has a stochastic capacity and a finite buffer size.
Queue spillback occurs when the the downstream link is full.
We derive a necessary condition and a sufficient condition for the total queue length to be bounded on average.
The necessary (resp. sufficient) condition leads to an upper (resp. lower) bound for the throughput of the two-link system.
Using our results, we analyze the sensitivity of throughput of the two-link system with respect to the frequency and intensity of capacity disruptions, and to the buffer size.
In addition, we discuss how our analysis can be extended to feedback-controlled systems and to networks consisting of merges and splits.\\

\noindent\emph{Keywords}: 
Fluid queueing model, queue spillback, stability analysis, stochastic capacity, throughput.
\newpage
\section{Introduction}

Capacity disruptions are common in transportation systems.
Typical examples include incidents on freeways \cite{skabardonis97,schrank12} and weather-related capacity drops at airports \cite{peterson95,ny11}.
In addition, some authors pointed out that the performance of transportation systems is also affected by congestion propagation within the system, i.e. spillback \cite{daganzo98,papageorgiou03}. 
In this article, we study the behavior of transportation systems under the effect of both capacity fluctuation and spillback.

Our study is based on the tandem fluid queueing model, a useful tool for performance analysis and control design for a variety of transportation systems, including highway systems \cite{newell13,jin2018modeling} and air transportation \cite{odoni87,bertsimas00}.
The impact of stochastic capacity and that of spillback have been studied by two relatively independent lines of work.
Both lines of work are based on fluid queueing models.
The first line of work \cite{anick82,kulkarni97,jin16} focuses on the stability and steady-state queue length of individual fluid queueing links with stochastically varying capacities.
The second line of work \cite{daganzo98,ran94,shen14} studies the equilibrium flow of deterministic queueing networks with spillback. 
However, very limited results are available for servers with both time-varying service rates and spillback.
In this article, we use a simple fluid queueing model to study the {\bf joint} impact of these two factors, which provides new insights for operations of transportation systems.

Specifically, we consider two fluid queueing links in series, where a constant inflow is sent to the upstream link.
The upstream link has a constant capacity and an infinite buffer size.
The downstream link has a stochastically varying capacity and a finite buffer size. Queue spillback happens when the queue in the downstream link attains the buffer size.
Note that similar models can also capture fluctuations in the arrival process of vehicles \cite{newell13} or the impact of vehicle platoons \cite{jin2018modeling}.
Following \cite{dai95ii}, we consider the two-link system to be stable if the total queue length is bounded on average, i.e. the long-time average of the total queue length being bounded.
We view the supremum of the set of stable inflows as the throughput.
Our objective is to derive stability conditions for the two-link system, which leads to bounds on the throughput.
Although we are motivated by transportation applications, our approach are also relevant for communication networks and manufacturing systems \cite{kulkarni97,yu04}.


The main results of this article (Propositions~\ref{Prp_Stable1} and \ref{Prp_Stable2}) provide stability conditions for the two-link system.
The stability conditions build on known results on stability analysis of continuous-time Markov processes \cite{davis84,meyn93} and steady-state behavior of stochastic fluid queueing models \cite{anick82,kulkarni97,jin16}.

First, a necessary condition (Propositions~\ref{Prp_Stable1}) is derived based on known results on the steady-state behavior of single fluid queueing links with finite buffer sizes \cite{anick82,kulkarni97}.
This condition estimates the actual throughput by incorporating an estimate of the spillback probability.
As a necessary condition, Propositions~\ref{Prp_Stable1} provides an upper bound for the throughput.
An important insight from this result is that the two-link system is not necessarily stable even if the inflow is strictly less than the time-average capacity of each link.
Thus, knowing the average capacity is not necessarily sufficient for the purpose of efficient operations of transportation systems.

Second, a sufficient condition (Propositions~\ref{Prp_Stable2}) is that the inflow and the parameters verify a set of linear inequalities.
To derive this condition, we consider a polynomial Lyapunov function for the two-link system and apply the Foster-Lyapunov drift condition \cite{meyn93}.
As a sufficient condition, Propositions~\ref{Prp_Stable2} provides a lower bound for the throughput.

Using the above results, we analyze how the throughput of the two-link system varies with the magnitude of capacity fluctuation and the buffer size. Our throughput analysis implies the following conclusions. 
First, throughput decreases with capacity variation and increases with buffer size.
Second, a small number of major capacity disruptions lead to more throughput loss than a large number of minor disruptions.
Third, throughput is the most sensitive to capacity variation.

Furthermore, we discuss two directions in which the analysis of the two-link system can be extended. First, we argue that our approach can be used to analyze fluid queueing systems with a class of feedback control policies. Second, we discuss how our analysis can be extended to more general networks. Specifically, we argue the extension of our results to merges and splits, which are the basic structures of general networks.

The rest of this article are organized as follows. In Section~\ref{Sec_Model}, we introduce the two-link system model. In Sections~\ref{Sec_Stable}, we derive the necessary condition and the sufficient condition, respectively. In Section~\ref{Sec_Analysis}, we analyze the throughput of the two-link system.
In Section~\ref{Sec_Discussion}, we discuss possible extensions of our results.
Section~\ref{Sec_Conclude} gives the concluding remarks.
\newpage
\section{System model}
\label{Sec_Model}

In this section, we define the two-link fluid queueing model, and introduce the assumptions that we use in our analysis.

\begin{figure}[hbt]
\centering
\includegraphics[width=0.3\textwidth]{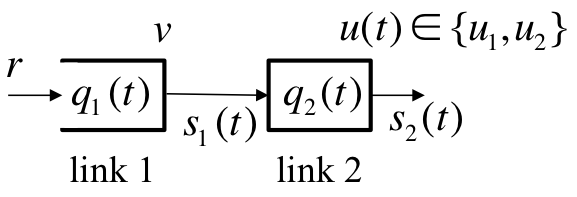}
\caption{A two-link fluid queueing model.}
\label{Fig_PDQ}
\end{figure}

Consider the system in Figure~\ref{Fig_PDQ}, which consists of two links in series.
Traffic arrives at link 1 (the upstream link) at the constant \emph{inflow rate} $r$. 
Assume that $r$ can take value in $\R=\real_{\ge0}$.
The outflow from link 1 goes to link 2.
Let the vector of \emph{queue lengths} be denoted by $q(t)=[q_1(t),q_2(t)]^T$.
We assume that link 1 has an infinite buffer size (i.e. $q_1\in[0,\infty)$) and link 2 has a finite buffer size of $\theta$ (i.e. $q_2\in[0,\theta]$). Thus, the vector $q$ can take value in $\Q:=[0,\infty)\times[0,\theta]$.

Let $v$ denote the \emph{capacity} of link 1, i.e. the maximum rate at which $q_1$ can be discharged.
Assume for simplicity that link 1 has a constant capacity $v$.
However, let $u(t)$ denote the capacity of link 2, which switches between two values $u_1$ and $u_2$.
Our analysis in this article focuses on the following case:
\begin{asm}
\label{Asm_uv}
$0\le u_2\le v\le u_1$.
\end{asm}
Note that $u_2\le u_1$ is without loss of generality. Regarding $v$, if $v<u_2$, then no queue exists in link 2 after sufficiently long time; if $v>u_1$, then link 2 is the only bottleneck of the two-link system, and the system would behavior just like a single link with capacity $u(t)$; only under the Assumption~\ref{Asm_uv} is the interaction between the two links of interest.

Now we specify how $u(t)$ varies with time.
Let $\I=\{1,2\}$ be the set of \emph{modes} of the two-link system.
We denote the mode at time $t$ by $i(t)$. Each mode $i\in\I$ is associated with a \emph{fixed} capacity, denoted by $u_i$.
The evolution of mode $i(t)$ is governed by a two-state Markov process; the \emph{transition rate} from mode 1 to mode 2 is $\lambda>0$, and the transition rate from mode 2 to mode 1 is $\mu>0$. 

Given an initial mode $i_0\in\I$ at $t=t_0=0$, let $\{t_k;k=1,2,\ldots\}$ be the \emph{epochs} at which the mode transitions occur. Let $i_{k-1}$ be the mode during $[t_{k-1},t_k)$ and $x_k=t_k-t_{k-1}$. Then, $x_k$ follows an exponential distribution with the cumulative distribution function (CDF) \cite{gallager13}:
\begin{align}
\mathsf F_{x_k}(x)=\left\{\begin{array}{ll}
1-\exp({-\lambda x}), & i_{k-1}=1,\\
1-\exp({-\mu x}), & i_{k-1}=2,
\end{array}\right.
\quad k=1,2\ldots
\label{Eq_Exp}
\end{align}
We can write the transition rates in the $2\times 2$ matrix:
\begin{align}
\Lambda:=\left[\begin{array}{cc}
-\lambda & \lambda\\
\mu & -\mu
\end{array}\right].
\label{Eq_Lmd}
\end{align}

By Theorem 7.2.7 in \cite{gallager13}, the mode transition process $\{i(t);t\ge0\}$ converges towards a unique steady-state distribution, i.e. a row vector $\sfp=[\sfp_1,\sfp_2]$ satisfying
\begin{align}
\sfp\Lambda=0,\;|\sfp|=1,\;\sfp\ge0,
\label{Eq_pLmd}
\end{align}
where $|\sfp|=\sfp_1+\sfp_2$. One can easily see that
\begin{align*}
\sfp_1=\frac{\mu}{\lambda+\mu},\;
\sfp_2=\frac{\lambda}{\lambda+\mu}.
\end{align*}

The \emph{discharge rates} $s$ of both links can be written as functions of the mode $i$, the vector of queue lengths $q$, and the inflow $r$:
\begin{subequations}
\begin{align}
&s_1(i,q,r):=\left\{\begin{array}{ll}
r, & q_1=0,\; q_2<\theta,\;r\le v,\\
v, & q_1>0,\;q_2<\theta,\\
v, & q_1>0,\;q_2=\theta,\;v\le u_i,\\
u_i, & o.w.
\end{array}\right.
\label{Eq_s1}\\
&s_2(i,q,r):=\left\{\begin{array}{ll}
r, & q_1=q_2=0,\; r\le \min\{v, u_i\},\\
v, & q_1=q_2=0,\;v\le \min\{r, u_i\},\\
v, & q_1>0,\;q_2=0,\; v\le u_i,\\
u_i, & o.w.
\end{array}\right.
\label{Eq_s2}
\end{align}
\label{Eq_s}
\end{subequations}
\hspace{-5pt}Note that \eqref{Eq_s1} accounts for the effect of spillback.
Then, we define a vector field $F:\I\times\Q\times\R\to\real^2$ as follows:
\begin{align}
F(i,q,r):=\left[\begin{array}{c}
r-s_1(i,q,r)\\
s_1(i,q,r)-s_2(i,q,r)
\end{array}\right].
\label{Eq_D}
\end{align}

Thus, the evolution of the \emph{hybrid state} $(i(t),q(t))$ of the two-link system is specified by the matrix $\Lambda$ and the vector field $F$ as follows
\begin{subequations}
\begin{align}
&i(0)=i,\;q(0)=q,\quad (i,q)\in\I\times\Q,\label{Eq_q0}\\
&\Pr\{i(t+\Delta t)=j|i(t)=i\}=(\lambda\mathbf1_{i=1}+\mu\mathbf1_{i=2})\Delta t+\mathrm o(\Delta t),\\
&\frac{dq(t)}{dt}=F\Big(i(t),q(t),r\Big).
\label{Eq_D}
\end{align}
\label{Eq_System}
\end{subequations}

The system defined in \eqref{Eq_System} is in fact a \emph{piecewise-deterministic Markov process} (PDMP, see \cite{davis84,benaim15}).
One can easily check that, for any initial condition $(i,q)\in\Init$, the integral curve induced by the vector field $F(i,q,r)$ is unique and continuous. Furthermore, $q(t)$ is not reset after mode transitions. Thus, the stochastic process $\{(i(t),q(t));t\ge0\}$ is a right continuous with left limits (RCLL, or \emph{c\`adl\`ag}) PDMP \cite{davis84}.
Then, following Theorem 5.5 in \cite{davis84}, the \emph{infinitesimal generator} $\Ell$ of the two-link system with inflow $r\in\R$ is given by
\begin{align}
\Ell g(1,q)&=F(1,q,r)\frac{\partial g(1,q)}{\partial q}+\lambda\Big( g(2,q)-g(1,q)\Big),\;q\in\Q\nonumber\\
\Ell g(2,q)&=F(2,q,r)\frac{\partial g(2,q)}{\partial q}+\mu\Big( g(1,q)-g(2,q)\Big),\;q\in\Q,
\label{Eq_Lg}
\end{align}
where $g$ is any function on $\I\times\Q$ smooth in the continuous argument.

In this article, we follow \cite{dai95} and consider the following notion of stability. We say that the total queue length is \emph{bounded on average} if there exists $K<\infty$ such that, for each initial condition $(i,q)\in\I\times\Q$,
\begin{align}
\limsup_{t\to\infty}\frac1t\int_{\tau=0}^t\E[|q(\tau)|]d\tau\le K,
\label{Eq_Bounded}
\end{align}
where $|q|=q_1+q_2$.
In addition, we say that the two-link system is \emph{non-evanescent} if, for each initial condition $(i,q)\in\I\times\Q$,
\begin{align}
\Pr\left\{\lim_{t\to\infty}|q(t)|=\infty\Big|i(0)=i,q(0)=q\right\}=0;
\label{Eq_NonEvanescence}
\end{align}
i.e., the system is non-evanescent if the queue length is finite almost surely (a.s.).
According to \cite{meyn93}, non-evanescence is a necessary condition for boundedness on average.

Finally, given a two-link system, we define the \emph{maximum throughput} of the two-link system, denoted by $J_{\max}$, as the supremum of the set of inflows $r$ such that the first moment of the vector of queue lengths is bounded on average in the sense of \eqref{Eq_Bounded}.
\newpage
\section{Stability conditions for two-link system}
\label{Sec_Stable}

In this section, we derive a necessary condition (Section~\ref{Sec_Stable1}) and a sufficient condition (Section~\ref{Sec_Stable2}) for the stability of the two-link system. These results are basis for our subsequent analysis.

\subsection{Necessary condition: Non-evanescence}
\label{Sec_Stable1}

The necessary condition is based on known results of the steady-state behavior of fluid queueing models with stochastically switching capacities and finite buffer sizes \cite{kulkarni97}.
For the sake of completeness, we recall here the results from \cite{kulkarni97}. Consider a constant $r\ge0$ such that
\begin{align}
r<\min\left\{v,\frac{\mu}{\lambda+\mu}u_1+\frac{\lambda}{\lambda+\mu}u_2\right\}.
\label{Eq_Stable0}
\end{align}
One will see that this constant is essentially the inflow (and hence we use the same notation).
Define
\begin{align*}
D:=\left[\begin{array}{cc}
r-u_1 & 0\\
0 & r-u_2
\end{array}\right]
\end{align*}
and let $\Lambda$ as defined in \eqref{Eq_Lmd}.
Let $w_1$ and $w_2$ be the distinct solutions to the equation
\begin{align*}
\det[wD-\Lambda]=0.
\end{align*}
Let row vectors $\phi_1=[\phi_{11},\phi_{12}]$ and $\phi_2=[\phi_{21},\phi_{22}]$ and scalars $k_1$, $k_2$ be the solutions to
\begin{align*}
&\phi_1[w_1D-\Lambda]=0,\\
&\phi_2[w_2D-\Lambda]=0,\\
&k_1\phi_{12}+k_2\phi_{22}=0,\\
&k_1\phi_{11}\exp(w_1\theta)+k_2\phi_{21}\exp(w_2\theta)=\frac{\mu}{\lambda+\mu}.
\end{align*}
Next, define
\begin{align}
\hat\sfp:=\frac{\lambda}{\lambda+\mu}-k_1\phi_{12}\exp(w_1\theta)-k_2\phi_{22}\exp(w_2\theta).
\label{Eq_phat}
\end{align}

Then, recalling the definition of $v$, we state our necessary condition as follows:

\begin{prp}
\label{Prp_Stable1}
Consider the two-link system defined in \eqref{Eq_System}.
If the total queue lengths are bounded on average in the sense of \eqref{Eq_Bounded}, then
\begin{align}
r\le(1-\hat\sfp)v+\hat\sfp u_2,
\label{Eq_Stable1}
\end{align}
where $\hat\sfp$ is defined in \eqref{Eq_phat}.
\end{prp}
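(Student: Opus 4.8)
The plan is to combine a flow-conservation balance at link~1 with a stochastic-domination argument that compares the downstream content $q_2$ against the Markov-modulated fluid queue whose stationary spillback probability is exactly $\hat\sfp$. First I would record the balance for link~1. Since the dynamics \eqref{Eq_System} give $\dot q_1=r-s_1$, integrating and taking expectations yields, for every $t$,
\begin{align*}
\E[\bar s_1(t)]=r-\frac{\E[q_1(t)]-\E[q_1(0)]}{t},\qquad \bar s_1(t):=\frac1t\int_0^t s_1\,d\tau.
\end{align*}
Boundedness on average \eqref{Eq_Bounded} forces $\liminf_{t\to\infty}\E[q_1(t)]/t=0$, since otherwise the Cesàro average in \eqref{Eq_Bounded} would diverge; hence along a subsequence $t_n\to\infty$ one has $\E[\bar s_1(t_n)]\to r$.

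Next I would bound the link~1 outflow pathwise. Reading off \eqref{Eq_s1} under Assumption~\ref{Asm_uv}, the discharge $s_1$ equals the capacity $v$ except on the spillback-in-mode-2 event $\{q_2=\theta,\,i=2\}$, where (whether or not $q_1>0$) it drops to $u_2$; combined with the capacity constraint $s_1\le v$ this gives, at all times, $s_1\le v-(v-u_2)\mathbf{1}\{q_2=\theta,\,i=2\}$. Taking expectations and time-averages yields $\E[\bar s_1(t)]\le v-(v-u_2)\,\bar\pi(t)$, where $\bar\pi(t):=\frac1t\int_0^t\Pr\{q_2(\tau)=\theta,\,i(\tau)=2\}\,d\tau$.

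The heart of the proof is to lower-bound $\bar\pi$ by $\hat\sfp$. On the same probability space, driven by the same modulating chain $i(t)$, I would realize the finite-buffer fluid queue $\tilde q_2\in[0,\theta]$ with constant inflow $r$ and service $u_i$, i.e. $\dot{\tilde q}_2=r-u_i$ reflected at $0$ and $\theta$; this is precisely the queue whose spectral solution is recalled before the statement, and by \cite{kulkarni97} its unique stationary law assigns mass $\hat\sfp=\sfp_2-\lim_{x\uparrow\theta}\Pr\{\tilde q_2\le x,\,i=2\}$ to the full-buffer set $\{\tilde q_2=\theta,\,i=2\}$, i.e. $\hat\sfp$ is exactly the atom at the full buffer in mode~2. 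A monotone-coupling check then shows $q_2(t)\ge\tilde q_2(t)$ for all $t$ whenever $q_2(0)\ge\tilde q_2(0)$: when the two contents meet in the interior $q_2=\tilde q_2\in(0,\theta)$ there is no spillback, so $s_1\in\{v,r\}$ and the actual drift $s_1-u_i$ dominates the virtual drift $r-u_i$ because $r<v$; at the reflecting barriers the ordering is preserved, again using $r\le v$. Consequently $\mathbf{1}\{q_2=\theta,\,i=2\}\ge\mathbf{1}\{\tilde q_2=\theta,\,i=2\}$ pathwise, so $\bar\pi(t)\ge\frac1t\int_0^t\Pr\{\tilde q_2(\tau)=\theta,\,i(\tau)=2\}\,d\tau\to\hat\sfp$ by ergodicity of the positive-recurrent virtual queue.

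Finally I would combine the three pieces along $t_n$: conservation gives $\E[\bar s_1(t_n)]\to r$, the outflow bound gives $\E[\bar s_1(t_n)]\le v-(v-u_2)\bar\pi(t_n)$, and domination gives $\liminf_n\bar\pi(t_n)\ge\hat\sfp$; since $v-u_2\ge0$, passing to the limit yields $r\le v-(v-u_2)\hat\sfp=(1-\hat\sfp)v+\hat\sfp u_2$, which is \eqref{Eq_Stable1}. I expect the domination step to be the main obstacle: verifying $q_2\ge\tilde q_2$ demands a careful case analysis of the drifts at the reflecting boundaries $q_2\in\{0,\theta\}$ and across the switching instants of $i(t)$, together with the identification of the Kulkarni full-buffer atom with $\hat\sfp$. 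By contrast, the conservation direction is routine once the subsequence with $\E[q_1(t_n)]/t_n\to0$ is extracted.
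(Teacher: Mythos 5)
Your proposal follows essentially the same route as the paper's proof: flow conservation at link~1, the pathwise bound $s_1\le v-(v-u_2)\mathbf{1}\{q_2=\theta,\,i=2\}$, and a comparison of the spillback fraction with the isolated finite-buffer queue (the paper's ``link 2$'$'') whose stationary full-buffer probability is $\hat\sfp$. The only difference is one of rigor, in your favor: the paper simply asserts that $s_1\ge r$ at all times and hence that link~2 spills back at least as often as link~2$'$, whereas you spell out the monotone coupling (noting correctly that the drift comparison is only needed in the interior, where there is no spillback and $s_1\in\{r,v\}\ge r$) and extract a subsequence with $\E[q_1(t_n)]/t_n\to0$ rather than assuming the limits exist.
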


The necessary condition essentially gives an upper bound on the maximum throughput, which is adjusted for the effect of the spillback.

The intuition of Proposition~\ref{Prp_Stable1} is as follows.
Suppose that we isolate link 2 from the two link system; i.e. consider a {\bf single} server, which we call link 2', with buffer size $\theta$ and a capacity switching between $u_1$ and $u_2$ with transition rates $\lambda$ and $\mu$. The queue length in link 2' is denoted by $q'(t)$.
Then, $\hat\sfp$ as defined in \eqref{Eq_phat} can be interpreted as the probability that the buffer is full Theorem 11.6 in \cite{kulkarni97}, i.e.
\begin{align*}
\lim_{t\to\infty}\frac1t\int_{\tau=0}^t\mathbf1_{q'(\tau)=\theta}d\tau
=\hat\sfp,\quad a.s.
\end{align*}
if a constant inflow $r$ is sent to link 2'.
In fact, $\hat\sfp$ is a lower bound for the probability that link 2 in the two-link system is full, provided that \eqref{Eq_Stable0} holds. The reason is that, under \eqref{Eq_Stable0}, the inflow sent to link 2 is no less than $r$ at all times.

In fact, \eqref{Eq_Stable0} is also a necessary condition for the stability of the two-link system. To see this, note that, if \eqref{Eq_Stable0} does not hold, then either the long-time average flow from link 1 to link 2 or the long-time average flow out of link 2 is less than the inflow $r$, which implies an unbounded queue.

The proof of the necessary condition is as follows:

\begin{proof}[Proof of Proposition \ref{Prp_Stable1}]
Suppose that the two-link system is stable.

From \eqref{Eq_D}, we obtain that
\begin{align*}
q_1(t)=\int_{\tau=0}^t\Big(r-s_1(\tau)\Big)d\tau+q_1(0),\quad t\ge0.
\end{align*}
We know that $\lim_{t\to\infty}q_1(0)/t=0$ for all $q_1(0)=q_{1}\ge0$. Thus, we have
\begin{align}
0&=\lim_{t\to\infty}\frac{1}{t}\Bigg(\int_{\tau=0}^t\Big(r-s_1(\tau)\Big)d\tau+q_1(0)-q_1(t)\Bigg)\nonumber\\
&=\lim_{t\to\infty}\frac{1}{t}\Bigg(\int_{\tau=0}^t\Big(r-s_1(\tau)\Big)d\tau-q_1(t)\Bigg).
\label{Eq_lim}
\end{align}

By \eqref{Eq_s1}, we see that
\begin{align*}
s_1(\tau)\le
\left\{\begin{array}{ll}
u_2, & i(\tau)=2,\;q_2(\tau)=\theta,\\
v, & o.w.,
\end{array}\right.
\quad\tau\ge0,
\end{align*}
which implies that
\begin{align}
&\lim_{t\to\infty}\frac{1}{t}\int_{\tau=0}^ts_1(\tau)d\tau\nonumber\\
&\le\lim_{t\to\infty}\frac{1}{t}\int_{\tau=0}^t\left(\mathbf1_{\substack{i(\tau)=2\\q_2(\tau)=\theta}}u_2+\left(1-\mathbf1_{\substack{i(\tau)=2\\q_2(\tau)=\theta}}\right)v\right)d\tau\nonumber\\
&=u_2\lim_{t\to\infty}\frac{1}{t}\int_{\tau=0}^t\mathbf1_{\substack{i(\tau)=2\\q_2(\tau)=\theta}}d\tau\nonumber\\
&\quad+v\left(1-\lim_{t\to\infty}\frac{1}{t}\int_{\tau=0}^t\mathbf1_{\substack{i(\tau)=2\\q_2(\tau)=\theta}}d\tau\right).
\label{Eq_lims1}
\end{align}
Since $s_1(\tau)\ge r$ for all $\tau\ge0$, the limiting fraction of time when link 2 is full is lower-bounded by the limiting fraction of time when link 2' is full; i.e.
\begin{align}
\lim_{t\to\infty}\frac{1}{t}\int_{\tau=0}^t\mathbf1_{\substack{i(\tau)=2\\q_2(\tau)=\theta}}d\tau\ge\hat\sfp.
\label{Eq_mathbb1}
\end{align}
Recalling from Assumption~\ref{Asm_uv} that $v>u_2$, we obtain from \eqref{Eq_lims1} and \eqref{Eq_mathbb1} that
\begin{align}
\lim_{t\to\infty}\frac{1}{t}\int_{\tau=0}^ts_1(\tau)d\tau\le \hat\sfp u_2+v(1-\hat\sfp).
\label{Eq_lims2}
\end{align}

In addition, note that, if the total queue length is bounded on average, then
\begin{align}
\lim_{t\to\infty}q_1(t)/t=0,\quad a.s.
\label{Eq_limq}
\end{align}

Combining \eqref{Eq_lim}, \eqref{Eq_lims2}, and \eqref{Eq_limq}, we obtain \eqref{Eq_Stable1}, completing the proof.
\end{proof}
\subsection{Sufficient condition: Foster-Lyapunov drift condition}
\label{Sec_Stable2}

Recall from Section~\ref{Sec_Model} the definition of inflow $r$, saturation rates $v$, $u_1$, and $u_2$, transition rates $\lambda$ and $\mu$, and buffer size $\theta$. The sufficient condition is as follows:

\begin{prp}
\label{Prp_Stable2}
Consider the two-link system defined in \eqref{Eq_System} and satisfying Assumption~\ref{Asm_uv}.
If there exist positive  constants $a_1$, $a_2$, $b_1$, $b_2$, $c$, and $d$ satisfying the linear inequalities
\begin{subequations}
\begin{align}
&2(r-v)+\lambda(b_2-b_1)\le-c,\label{Eq_LP1}\\
&2(r-v)+a_1(v-u_1)+\lambda(a_2-a_1)\theta+\lambda(b_2-b_1)\le-c,\label{Eq_LP2}\\
&2(r-v)+a_2(v-u_2)+\mu(b_1-b_2)\le-c,\label{Eq_LP3}\\
&2(r-v)+a_2(v-u_2)+\mu(a_1-a_2)\theta+\mu(b_1-b_2)\le-c,\label{Eq_LP3.5}\\
&2(r-u_2)+\mu(a_1-a_2)\theta+\mu(b_1-b_2)\le-c,\label{Eq_LP4}\\
&d\ge a_1(r-v)+c\theta,\label{Eq_LP5.1}\\
&d\ge a_2(r-u_2)+c\theta,\label{Eq_LP5.2}\\
&d\ge c\theta,\label{Eq_LP5.3}
\end{align}
\label{Eq_LP}
\end{subequations}
then, for any initial condition $(i,q)\in\I\times\Q$,
\begin{align}
\limsup_{t\to\infty}\frac1t\int_{\tau=0}^t\E[|q(\tau)|]d\tau\le d/c.
\label{Eq_UB}
\end{align}
\end{prp}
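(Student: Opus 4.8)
The plan is to apply the Foster--Lyapunov drift criterion for continuous-time Markov processes \cite{meyn93,davis84}: I will exhibit a nonnegative function $V$ on $\I\times\Q$ in the domain of the generator \eqref{Eq_Lg} together with the constants $c,d>0$ such that
\begin{align}
\Ell V(i,q)\le -c|q|+d,\quad (i,q)\in\I\times\Q,
\label{Eq_drift_plan}
\end{align}
and then convert \eqref{Eq_drift_plan} into the averaged bound \eqref{Eq_UB} via Dynkin's formula. This last step is the routine part: once \eqref{Eq_drift_plan} holds, integrating the generator along the process and using $V\ge0$ gives $0\le\E[V(i(t),q(t))]\le V(i,q)-c\int_{0}^{t}\E[|q(\tau)|]d\tau+dt$, whence $\frac1t\int_{0}^{t}\E[|q(\tau)|]d\tau\le V(i,q)/(ct)+d/c$, and letting $t\to\infty$ yields \eqref{Eq_UB}. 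The real work is in choosing $V$ and verifying \eqref{Eq_drift_plan}.

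Since the inequalities \eqref{Eq_LP} are linear in $a_i,b_i$ and the recurring coefficient $2(r-v)$ is exactly what $\partial_{q_1}q_1^2$ produces under $\dot q_1=r-s_1$, the natural candidate is the mode-dependent quadratic
\begin{align}
V(i,q)=q_1^2+a_iq_1q_2+b_iq_1=q_1\big(q_1+a_iq_2+b_i\big).
\label{Eq_Lyap_plan}
\end{align}
Because $a_i,b_i>0$ and $q\in\Q$, we have $V\ge0$; moreover $V$ is smooth in the continuous variable $q$, and since $q$ is not reset at mode transitions, the generator \eqref{Eq_Lg} applies directly. Using \eqref{Eq_Lg} and \eqref{Eq_s1}--\eqref{Eq_s2} I would compute $\Ell V(i,q)$ explicitly: in each region it is affine in $q_1$ plus a bilinear term $q_1q_2$, the jump part contributing $\lambda(a_2-a_1)q_1q_2+\lambda(b_2-b_1)q_1$ in mode $1$ and $\mu(a_1-a_2)q_1q_2+\mu(b_1-b_2)q_1$ in mode $2$.

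The core of the argument is a region-by-region verification of \eqref{Eq_drift_plan}, where the regions are dictated by the piecewise discharge rates \eqref{Eq_s1}--\eqref{Eq_s2}: mode $1$ with $q_2=0$; mode $1$ with $0<q_2\le\theta$ (where $s_1=v$, $s_2=u_1$); mode $2$ with $0\le q_2<\theta$ (where $s_1=v$, $s_2=u_2$); and, crucially, the spillback region mode $2$ with $q_2=\theta$, where $s_1=u_2$ so that $\dot q_1=r-u_2$. In each region I would collect the coefficient of $q_1$, handling the bilinear term with $0\le q_2\le\theta$: the product $q_1q_2$ is bounded either by $\theta q_1$ or by $0$ according to the sign of $a_2-a_1$ (resp. $a_1-a_2$). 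This is precisely why the interior of mode $2$ needs \emph{both} \eqref{Eq_LP3} and \eqref{Eq_LP3.5}, whereas in mode $1$ the single inequality \eqref{Eq_LP1} already absorbs the opposite sign case because $a_1(v-u_1)\le0$ under Assumption~\ref{Asm_uv}. Inequalities \eqref{Eq_LP1}--\eqref{Eq_LP4} are exactly the statements that, after this bounding, the coefficient of $q_1$ is $\le-c$ in every region; the leftover $q_2$-linear and constant terms carry the factors $r-v$ and $r-u_2$ and, together with the estimate $-cq_1\le-c|q|+c\theta$ coming from $q_2\le\theta$, are controlled by \eqref{Eq_LP5.1}--\eqref{Eq_LP5.3} so that the residual additive term stays $\le d$.

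I expect the main obstacle to be twofold. First is the combinatorial bookkeeping of the bilinear term: making \eqref{Eq_drift_plan} hold simultaneously for every sign of $a_1-a_2$ is what forces the paired inequalities, and one must check that dropping or upper-bounding $q_1q_2$ never exceeds the slack built into \eqref{Eq_LP}. Second, and more delicate, is rigorously justifying Dynkin's formula for this PDMP: one must confirm that $V$ in \eqref{Eq_Lyap_plan} lies in the domain of the extended generator of \cite{davis84} and that the integrability needed to exchange expectation and integration holds, given that the jump sizes $|V(2,q)-V(1,q)|$ grow linearly in $q_1$. The boundary behaviour at $q_1=0$ and $q_2\in\{0,\theta\}$ is compatible with \eqref{Eq_Lg} since there the vector field is inward-pointing or boundary-tangent and $V$ is smooth; the region $q_1=0$ is otherwise harmless because $|q|\le\theta$ there.
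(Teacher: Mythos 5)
Your proposal follows essentially the same route as the paper's proof: the identical Lyapunov function $V(i,q)=q_1^2+a_iq_1q_2+b_iq_1$ with $f(i,q)=q_1+q_2$, the same Foster--Lyapunov drift criterion (the paper simply invokes Theorem 4.3 of \cite{meyn93} rather than re-deriving it via Dynkin's formula), and the same region-by-region verification in which \eqref{Eq_LP1}--\eqref{Eq_LP4} force the coefficient of $q_1$ below $-c$ and \eqref{Eq_LP5.1}--\eqref{Eq_LP5.3} absorb the residual $q_2$-terms. Your identification of the spillback region ($i=2$, $q_2=\theta$, $\dot q_1=r-u_2$, matching \eqref{Eq_LP4}), of the boundary case $q_1=0$, and of why mode 1 needs no analogue of \eqref{Eq_LP3.5} (since $a_1(v-u_1)\le0$ under Assumption~\ref{Asm_uv}) agrees with---and is in places stated more carefully than---the paper's own five-case analysis.
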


The above condition is easy to check, since the linear inequalities \eqref{Eq_LP} can be efficiently solved using known methods \cite{bertsimas97}. 

In general, there may be a gap between Proposition~\ref{Prp_Stable2} and the necessary condition, Proposition~\ref{Prp_Stable1} (see Section~\ref{Sec_Analysis} for examples).
Finally, as a sufficient condition, Proposition~\ref{Prp_Stable2} leads to lower bounds for the throughput of the two-link system.

Proposition~\ref{Prp_Stable2} is derived based on the Foster-Lyapunov drift condition Theorem 4.3 in \cite{meyn93}.
Here we recall this result for the sake of completeness:
if there exist a norm-like\footnote{According to \cite{meyn93}, a function $V:\I\times\Q\to\real_{\ge0}$ is norm-like if $\lim_{q_1\to\infty}V=\infty$ for all $i\in\I$.} function $V:\I\times\Q\to\real_{\ge0}$ (called the \emph{Lyapunov function}), a function $f:\I\times\Q\to[1,\infty)$, and constants $c>0$ and $d<\infty$ such that
\begin{align}
\Ell V(i,q)\le-cf(i,q)+d,\quad\forall(i,q)\in\I\times\Q,
\label{Eq_Drift}
\end{align}
then
\begin{align*}
\limsup_{t\to\infty}\frac1t\int_{\tau=0}^t\E\Big[f\Big(i(\tau),q(\tau)\Big)\Big]d\tau\le d/c.
\end{align*}
The main challenge for applying the Foster-Lyapunov drift condition is that the inequality \eqref{Eq_Drift} has to hold for all $(i,q)\in\I\times\Q$.
In this article, we propose a polynomial Lyapunov function, and employ properties of the fluid queueing dynamics to translate the drift condition to the form of linear inequalities.

In this article, we consider the following Lyapunov function:
\begin{align*}
V(i,q)=q_1^2+a_iq_1q_2+b_iq_1.
\end{align*}
This polynomial form is motivated by quadratic Lyapunov functions used for queueing networks \cite{kumar95}.
In addition, we consider
\begin{align*}
f(i,q)=q_1+q_2,\quad i=1,2.
\end{align*}
The proof of Proposition~\ref{Prp_Stable2} is as follows:

\begin{proof}[Proof of Proposition~\ref{Prp_Stable2}]
Applying the infinitesimal generator $\Ell$ to the Lyapunov function, we have
\begin{subequations}
\begin{align}
&\Ell V(1,q)=2q_1\dot q_1+a_1\dot q_1q_2+a_1q_1\dot q_2+b_1\dot q_1\nonumber\\
&\quad+\lambda(a_2-a_1)q_1q_2+\lambda(b_2-b_1)q_1\nonumber\\
&=(2(r-s_1(q_1,q_2))+a_1(s_1(q_1,q_2)-s_2(1,q_1,q_2))\nonumber\\
&\quad+\lambda(a_2-a_1)q_2+\lambda(b_2-b_1))q_1+a_1(r-s_1(q_1,q_2))q_2\label{Eq_LV1}\\
&\Ell V(2,q)=2q_1\dot q_1+a_2\dot q_1q_2+a_2q_1\dot q_2+b_2\dot q_1\nonumber\\
&\quad+\mu(a_1-a_2)q_1q_2+\mu(b_1-b_2)q_1\nonumber\\
&=(2(r-s_1(q_1,q_2))+a_2(s_1(q_1,q_2)-s_2(2,q_1,q_2))\nonumber\\
&\quad+\mu(a_1-a_2)q_2+\mu(b_1-b_2))q_1+a_2(r-s_1(q_1,q_2))q_2.\label{Eq_LV2}
\end{align}
\label{Eq_LV}
\end{subequations}
To check the drift condition \eqref{Eq_Drift}, we need to consider five cases:
\begin{enumerate}
\item $i=1$, $q_1>0$, and $q_2=0$. In this case, we have 
\begin{align*}
&\Ell V(1,q)
\stackrel{\eqref{Eq_LV1}}{=}(2(r-v)+a_1(v-v)+\lambda(b_2-b_1))q_1\\
&=(2(r-v)_\lambda(b_2-b_1))q_1
\stackrel{ \eqref{Eq_LP1}}{\le}-cq_1,
\end{align*}
which implies $\Ell V\le-c|q|+d$.

\item $i=1$, $q_1>0$, and $0<q_2\le\theta$. In this case, we have
\begin{align*}
&\Ell V(1,q)
\stackrel{\eqref{Eq_LV1}}{=}(2(r-v)+a_1(v-u_1)+\lambda(a_2-a_1)q_2\\
&\quad+\lambda(b_2-b_1))q_1+a_1(r-v)q_2\\
&\stackrel{ \eqref{Eq_LP2}}{\le}-cq_1+a_1(r-v)q_2
\stackrel{ \eqref{Eq_LP5.1}}{\le}-c(q_1+q_2)+d.
\end{align*}

\item $i=2$, $q_1>0$, and $0\le q_2<\theta$. In this case, we have \begin{align*}
&\Ell V(2,q)
\stackrel{\eqref{Eq_LV2}}{=}(2(r-v)+a_2(v-u_2)+\mu(a_1-a_2)q_2\\
&\quad+\mu(b_1-b_2))q_1+a_2(r-v)q_2\\
&\stackrel{\eqref{Eq_LP3}\eqref{Eq_LP3.5}}{\le}-cq_1+a_2(r-v)q_2
\stackrel{ \eqref{Eq_LP5.2}}{\le}-c(q_1+q_2)+d,
\end{align*}
where we have applied Assumption~\ref{Asm_uv} for the last inequality.

\item $i=2$, $q_1>0$, and $q_2=\theta$. In this case, we have
\begin{align*}
&\Ell V(2,q)
\stackrel{\eqref{Eq_LV2}}{=}(2(r-u_2)+a_2(u_2-u_2)+\mu(a_1-a_2)q_2\\
&\quad+\mu(b_1-b_2))q_1+a_2(r-u_2)q_2\\
&\stackrel{\eqref{Eq_LP3}\eqref{Eq_LP3.5}}{\le}-cq_1+a_2(r-u_2)q_2
\stackrel{ \eqref{Eq_LP5.2}}{\le}-c(q_1+q_2)+d,
\end{align*}
which implies $\Ell V\le-c|q|+d$.

\item $q_1=0$. In this case, we have
\begin{align*}
&\Ell V(i,q)
\stackrel{\eqref{Eq_LV}}{=}a_i(r-s(i,q_1,q_2))q_2
\stackrel{\eqref{Eq_s}}{\le} a_i(r-u_2)q_2\\
&\quad\stackrel{\eqref{Eq_LP5.1}\mbox{--}\eqref{Eq_LP5.3}}{\le}-cq_2+d,
\end{align*}
which implies $\Ell V\le-c|q|+d$.

\end{enumerate}
In conclusion, the drift condition \eqref{Eq_Drift} holds (for all $i\in\I$ and all $q\in\Q$). We can then obtain \eqref{Eq_UB} from Theorem 4.3 in \cite{meyn93}.

\end{proof}
\newpage
\section{Throughput analysis of two-link system}
\label{Sec_Analysis}

In this section, we use our results to study the impact due to capacity fluctuation and buffer size on the throughput of the two-link system.
Specifically, we consider the nominal (baseline) model specified by the following parameters:
\begin{subequations}
\begin{align}
&v=0.75,\;u_1=1,\;u_2=0.5,\\
&\lambda=1,\;\mu=1,\;\theta=1.
\end{align}
\label{Eq_Parameters}
\end{subequations}
We fix the average saturation rate of link 2 (i.e. $(u_1+u_2)=0.75$), and study how the throughput changes with (i) the magnitude of capacity variation, quantified by the quantity $\Delta u=u_1-u_2$, (ii) the frequency of capacity fluctuation, quantified by $\lambda$ and $\mu$, and (iii) the buffer size $\theta$.

Although it is not easy to compute the exact value of the maximum throughput $J_{\max}$, the necessary (resp. sufficient) condition leads to upper (resp lower) bounds for $J_{\max}$. For a given two-link system, if an inflow value $r_1$ does not satisfy Proposition~\ref{Prp_Stable1}, then the system is unstable with the inflow $r_1$, and we can thus conclude that $r_1$ an upper bound for $J_{\max}$.
Similarly, if an inflow value $r_2$ satisfies Proposition~\ref{Prp_Stable2}, then the system is stable with the inflow $r_2$, and we can thus conclude that $r_2$ a lower bound for $J_{\max}$.
The gap between the bounds can be narrowed by minimize (resp. maximize) the upper (resp. lower) bound.

\subsection{Magnitude of capacity variation}

Suppose that $u_1$ and $u_2$ are specified as follows:
\begin{align*}
u_1=0.75+\Delta u/2,\;
u_2=0.75-\Delta u/2,\;\Delta u\in[0,1.5].
\end{align*}
Note that the upper bound for $\Delta u$ ensures that $u_2$ is non-negative.
For various values of $\Delta u$, we use Propositions~\ref{Prp_Stable1} and \ref{Prp_Stable2} to obtain upper and lower bounds for $J_{\max}$; we also numerically optimize the bounds. The results are plotted in Figure~\ref{Fig_J_Deltau}.

\begin{figure}[hbt]
\centering
\subfigure[Throughput versus capacity variation.]{
\centering
\includegraphics[width=0.31\textwidth]{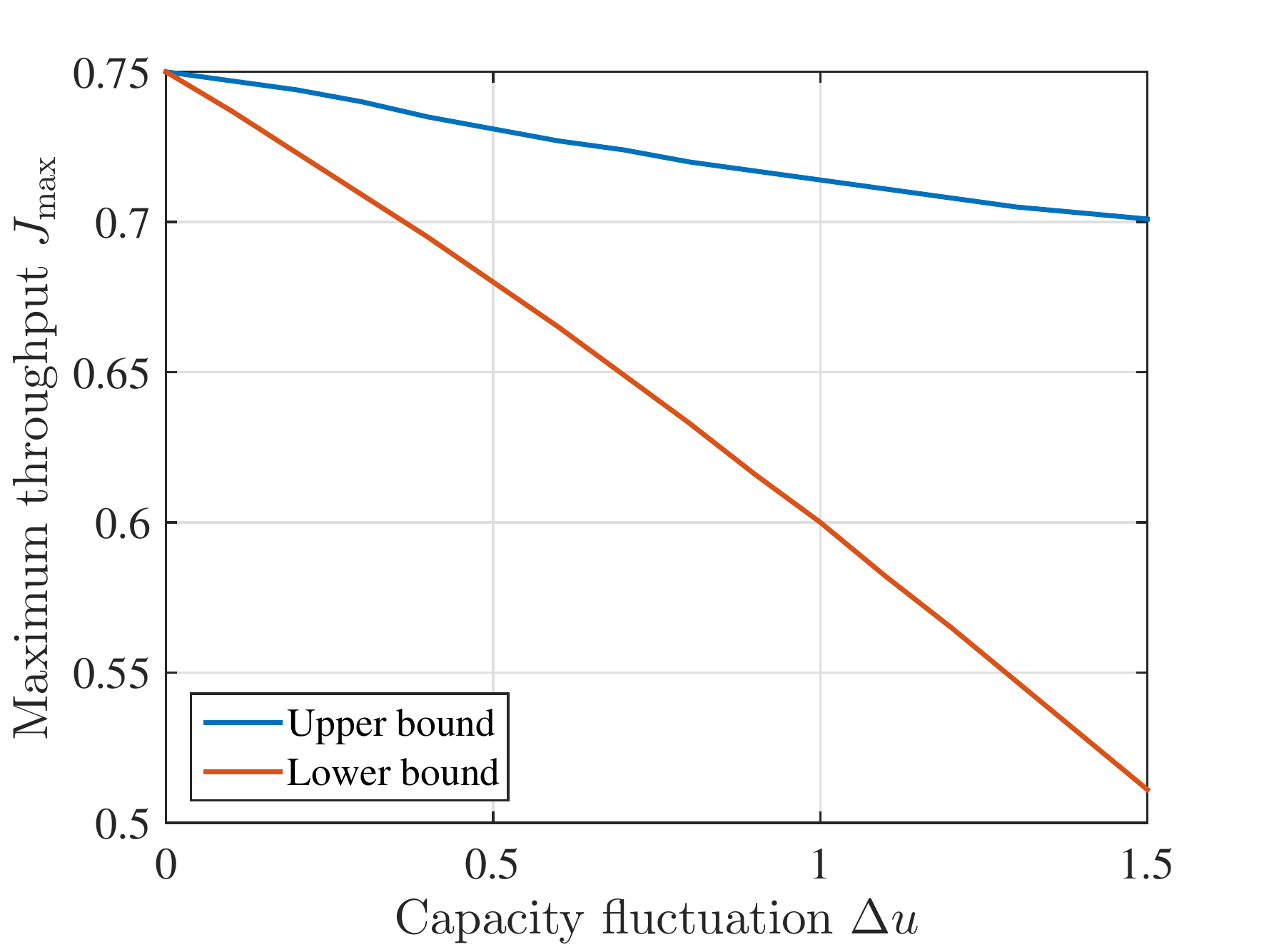}
\label{Fig_J_Deltau}
}
\subfigure[Throughput versus (logarithmized) transition rates.]{
\centering
\includegraphics[width=0.31\textwidth]{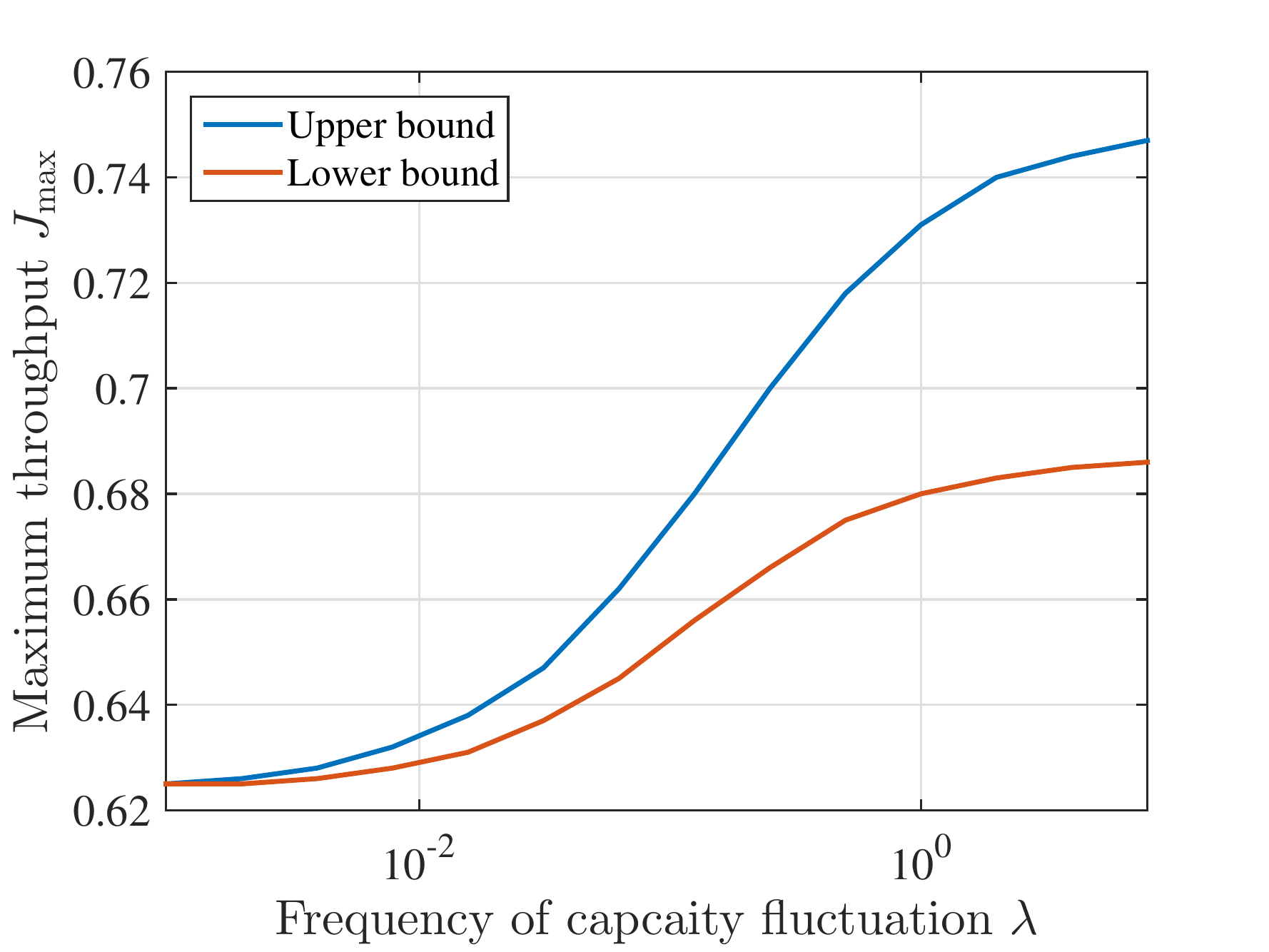}
\label{Fig_J_lambda}
}
\subfigure[Throughput versus (logarithmized) buffer size.]{
\centering
\includegraphics[width=0.31\textwidth]{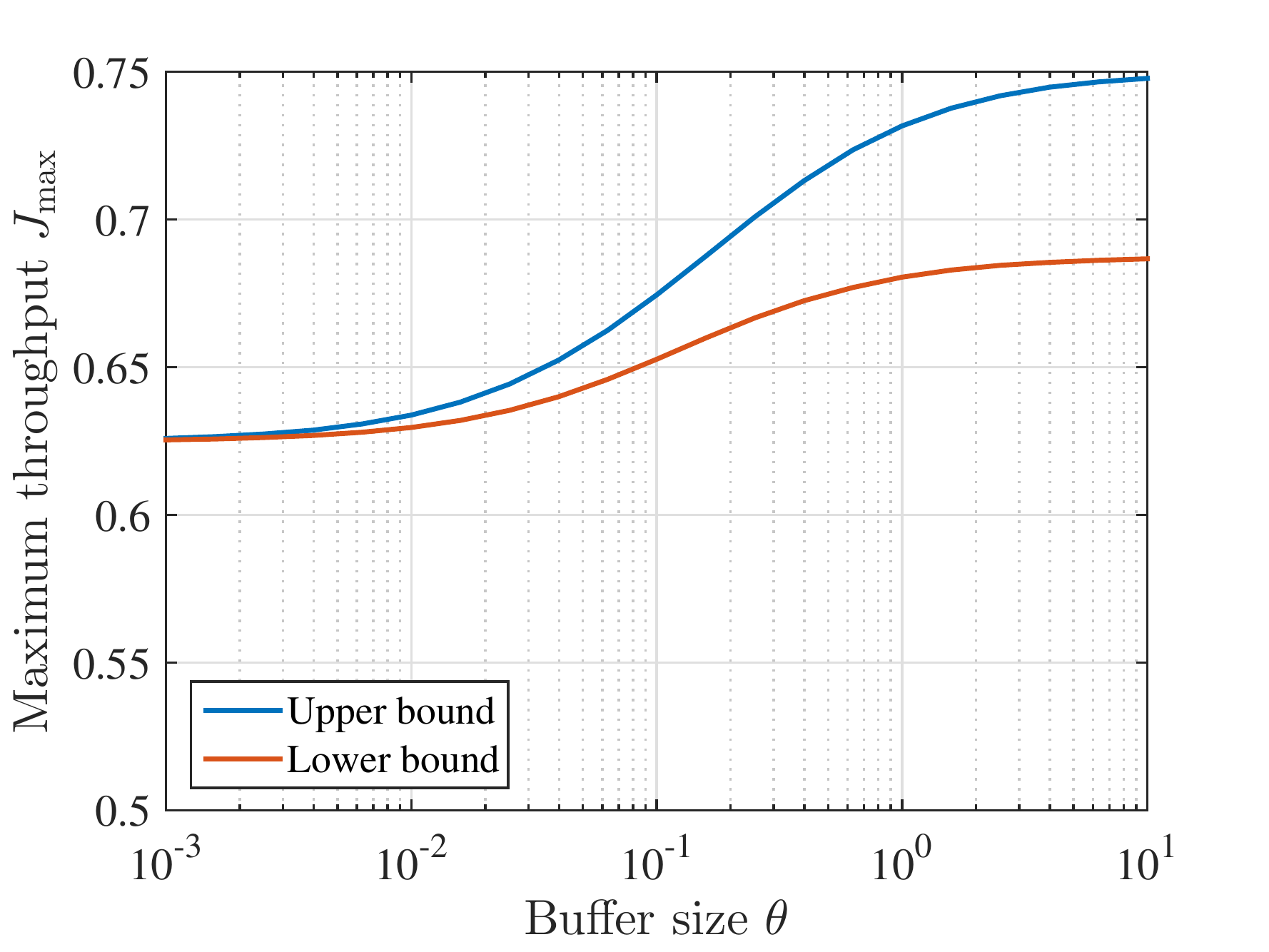}
\label{Fig_J_theta}
}
\caption{Throughput versus various model parameters.}
\label{Fig_J}
\end{figure}

The results imply that, with the average capacity fixed, the maximum throughput decreases as the magnitude of capacity fluctuation increases. In addition, both bounds converges to 0.75 as $\Delta u$ approaches 0; this is intuitive in that link 2 reduces to a server with a deterministic capacity of 0.75 as $\Delta u$ approaches 0. Finally, as $\Delta u$ increases, the gap between the upper and the lower bounds increases.

From a practical viewpoint, the actual throughput of a transportation facility can be strictly less than its average capacity. The reason for this phenomenon is that capacity fluctuation leads to spillback and queues at the upstream sections or stages, which in turn leads to additional bottlenecks at those sections or stages; without capacity fluctuation, the additional bottlenecks do not arise. Therefore, capacity fluctuation plays a very important role in throughput analysis.

\subsection{Frequency of capacity fluctuation}
Now consider the system with $u_1=1$ and $u_2=0.5$, but with $\lambda$ varying. To fix the average capacity at 0.75, we always set $\mu=\lambda$ as $\lambda$ varies.
For various values of $\lambda$, we use Propositions~\ref{Prp_Stable1} and \ref{Prp_Stable2} to obtain upper and lower bounds for $J_{\max}$; we also numerically optimize the bounds. The results are plotted in Figure~\ref{Fig_J_lambda}.

The results imply that the maximum throughput increases as the frequency of capacity fluctuation $\theta$ increases. Both the upper and the lower bounds converges to 0.625 as $\lambda$ (and $\mu$) approaches 0. 
The reason is that, as $\lambda$ and $\mu$ approaches 0, the time intervals between mode transitions are very long on average; consequently, link 2 is empty during most of the time when $i(t)=1$ and is full during most of the time when $i(t)=2$.
Thus, the behavior of the two-link system is similar to a single link with a capacity switching between 0.75 and 0.5;

From a practical viewpoint, note that $\lambda$ characterizes the frequency of capacity disruptions and $\mu$ characterizes the duration of capacity disruptions. Hence, our result implies that less frequent but longer-lasting capacity disruption leads to larger throughput loss than more frequent but shorter-lasting capacity disruptions. 
The reason for this phenomenon is that, with the buffer size fixed, longer capacity disruptions are more likely to cause queue spillback than shorter ones.
In other words, frequent but short disruptions are not likely to cause congestion sufficiently severe to lead to spillback.

\subsection{Buffer size}

Now consider the system with $u_1=1$, $u_2=0.5$, $\lambda=\mu=1$, but with $\theta$ varying.
For various values of $\theta$, we use Propositions~\ref{Prp_Stable1} and \ref{Prp_Stable2} to obtain upper and lower bounds for $J_{\max}$; we also numerically optimize the bounds. The results are plotted in Figure~\ref{Fig_J_theta}.

The results imply that the maximum throughput increases as the buffer size $\theta$ increases. Both the upper and the lower bounds converges to 0.625 as $\theta$ approaches 0. This is intuitive: when the buffer size of link 2 is very small, the two-link system can be in fact viewed as a single link with a capacity switching between $v$ and $u_2$, and the average capacity is close to $0.5v+0.5u_2=0.625$. As $\theta$ increases, the gap between the upper and the lower bounds increases.

From a practical viewpoint, the throughput of a transportation facility not only depends on its capability of discharging queues, but also how much traffic it can store. If its storage space is limited and frequently leads to queues at the upstream sections or stages, then additional bottlenecks can be produced, which undermines the throughput.

In addition, comparison of Figures~\ref{Fig_J_Deltau}--\ref{Fig_J_theta} implies that throughput is more sensitive to capacity variation than to frequency of capacity fluctuation and to buffer size. To see this, recall that the baseline model has a capacity variation of 0.5, transition rates 1, and a buffer size of 1. If the capacity variation is doubled from 0.5 to 1, the upper (resp. lower) bound is decreased by 13\% (resp. 2.3\%). 
If the frequency of capacity fluctuation is doubled from 1 to 2, the upper (resp. lower) bound is increased by 1.2\% (resp. 0.4\%).
If the buffer size is doubled from 1 to 2, the upper (resp. lower) bound is increased by 0.6\% (resp. 1.1\%).
\newpage
\section{Further discussion}
\label{Sec_Discussion}

In this section, we discuss two possible extensions of our results on two-link systems with constant inflows.
In Section~\ref{Sec_Control}, we discuss the extension of the stability conditions to the cases where the inflow is specified by a mode-responsive control policy instead of constant, and compare some properties of the controlled system with those of the uncontrolled system.
In Section~\ref{Sec_Net}, we discuss the extension to merges and splits, which are the basic structures in general networks.

\subsection{Feedback-controlled system}
\label{Sec_Control}

Suppose that, instead of being constant, the inflow $r$ is specified by a function $\phi:\I\to\R$ such that
\begin{align}
r(t)=\phi(i(t))
=\left\{\begin{array}{ll}
r_1, & i(t)=1,\\
r_2, & i(t)=2.
\end{array}\right.
\label{Eq_phi}
\end{align}
Our stability conditions can be easily extended to feedback-controlled system as follows.

Define
\begin{align*}
\tilde D:=\left[\begin{array}{cc}
\min\{r_1,v\}-u_1 & 0\\
0 & \min\{r_2,v\}-u_2
\end{array}\right]
\end{align*}
and let $\Lambda$ as defined in \eqref{Eq_Lmd}.
Let $\tilde w_1$ and $\tilde w_2$ be the distinct solutions to the equation
\begin{align*}
\det[w\tilde D-\Lambda]=0.
\end{align*}
Let row vectors $\tilde \phi_1=[\tilde \phi_{11},\tilde \phi_{12}]$ and $\tilde \phi_2=[\tilde \phi_{21},\tilde \phi_{22}]$ and scalars $\tilde k_1$, $\tilde k_2$ be the solutions to
\begin{align*}
&\tilde \phi_1[\tilde w_1\tilde D-\Lambda]=0,\\
&\tilde \phi_2[\tilde w_2\tilde D-\Lambda]=0,\\
&\tilde k_1\tilde \phi_{12}+\tilde k_2\tilde \phi_{22}=0,\\
&\tilde k_1\tilde \phi_{11}\exp(\tilde w_1\theta)+\tilde k_2\tilde \phi_{21}\exp(\tilde w_2\theta)=\frac{\mu}{\lambda+\mu}.
\end{align*}
Next, define
\begin{align}
\tilde \sfp:=\frac{\lambda}{\lambda+\mu}- \tilde k_1\tilde \phi_{12}\exp(\tilde w_1\theta)-\tilde k_2\tilde \phi_{22}\exp(\tilde w_2\theta).
\label{Eq_ptilde}
\end{align}
Then, recall the definition of $v$, we state the necessary condition as follows:

\begin{prp}
\label{Prp_Stable3}
Consider the two-link system with the control policy $\phi$ given by \eqref{Eq_phi}.
If the total queue lengths are bounded on average in the sense of \eqref{Eq_Bounded}, then
\begin{align}
\sfp_1\min\{r_1,v\}+\sfp_2\min\{r_2,v\}\le(1-\tilde \sfp)v+\tilde \sfp u_2,
\label{Eq_Stable1}
\end{align}
where $\tilde \sfp$ is defined in \eqref{Eq_ptilde}.
\end{prp}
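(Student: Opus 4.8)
The plan is to follow the proof of Proposition~\ref{Prp_Stable1} verbatim in structure, replacing the constant inflow by the mode-dependent rate $\phi(i(t))$ and replacing the auxiliary single server by the link~$2'$ fed at rate $\min\{r_i,v\}$ in mode $i$ --- which is exactly the rate encoded by $\tilde D$. The argument splits into a flow-conservation identity on link~1 that yields the left-hand side of the claim, and a monotone-coupling bound on the long-run fraction of time link~2 is full that yields the right-hand side.

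First I would integrate $\dot q_1=\phi(i(\tau))-s_1(\tau)$ as in \eqref{Eq_lim}; boundedness on average forces $q_1(t)/t\to0$ a.s., so $\lim_{t\to\infty}\tfrac1t\int_0^t s_1(\tau)\,d\tau=\lim_{t\to\infty}\tfrac1t\int_0^t\phi(i(\tau))\,d\tau$. By the ergodic theorem for the two-state chain (Theorem~7.2.7 in \cite{gallager13}) the occupation fractions converge to $\sfp_1,\sfp_2$, so the last limit equals $\sfp_1 r_1+\sfp_2 r_2\ge\sfp_1\min\{r_1,v\}+\sfp_2\min\{r_2,v\}$, the left-hand side of \eqref{Eq_Stable1}. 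It then remains to upper-bound $\lim_t\tfrac1t\int_0^t s_1\,d\tau$.

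For the upper bound I would reuse the pointwise estimate from \eqref{Eq_s1}, namely $s_1(\tau)\le u_2\,\mathbf 1_{\{i(\tau)=2,\,q_2(\tau)=\theta\}}+v\big(1-\mathbf 1_{\{i(\tau)=2,\,q_2(\tau)=\theta\}}\big)$, so that (with $v>u_2$ from Assumption~\ref{Asm_uv}) the time-average of $s_1$ is at most $(1-\bar\pi)v+\bar\pi u_2$, where $\bar\pi$ is the long-run fraction of time link~2 is full. The crux is $\bar\pi\ge\tilde\sfp$. I would prove this by a synchronous coupling of $q_2$ with the isolated server $q'$ driven by the same mode process and the reflected dynamics $\dot q'=\min\{r_i,v\}-u_i$ on $[0,\theta]$, whose full-buffer fraction is $\tilde\sfp$ a.s. by Theorem~11.6 in \cite{kulkarni97} (this is what \eqref{Eq_ptilde} computes). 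The key inequality is that $s_1(\tau)\ge\min\{r_{i(\tau)},v\}$ whenever $q_2(\tau)<\theta$: if $q_1>0$ then $s_1=v$; if $q_1=0$ and $r_i\le v$ then $s_1=r_i$; and if $q_1=0$ and $r_i>v$ then in mode~1 one has $s_1=u_1\ge v$, while the remaining case (mode~2 with $r_2>v$) is instantaneous because there $\dot q_1=r_2-u_2>0$, so it forms a time-set of measure zero and does not affect the absolutely continuous trajectories. Since link~2 and link~$2'$ share the service rate $u_i$, a one-dimensional comparison argument then gives $q_2(t)\ge q'(t)$ for all $t$, hence $\mathbf 1_{\{q_2=\theta\}}\ge\mathbf 1_{\{q'=\theta\}}$ and $\bar\pi\ge\tilde\sfp$.

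Combining the two halves gives $\sfp_1\min\{r_1,v\}+\sfp_2\min\{r_2,v\}\le\lim_t\tfrac1t\int_0^t s_1\,d\tau\le(1-\tilde\sfp)v+\tilde\sfp u_2$, which is \eqref{Eq_Stable1}. I expect the coupling step to be the main obstacle: one must verify that $\min\{r_i,v\}$ really is the guaranteed lower envelope of the inflow $s_1$ into link~2 --- so that $\tilde D$, and therefore $\tilde\sfp$, is the correct comparison object --- and one must handle the instantaneous boundary states $\{q_1=0,\ r_2>v\}$ carefully so that the drift inequality $\dot q_2\ge\dot q'$ holds for almost every $\tau$ and the comparison $q_2\ge q'$ is legitimate.
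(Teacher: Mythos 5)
Your overall architecture is the natural one, and it is worth noting that the paper never actually proves Proposition~\ref{Prp_Stable3}: it is presented as an ``easy extension'' of Proposition~\ref{Prp_Stable1}, whose proof you mirror. Your lower-bound half (coupling $q_2$ with the isolated link $2'$ fed at rate $\min\{r_i,v\}$, so that the full-buffer fraction dominates $\tilde{\sfp}$) is sound, and is in fact more careful than the paper's one-line assertion of the analogous monotonicity. The gap is in your upper-bound half. The pointwise estimate $s_1(\tau)\le u_2\mathbf 1_{\{i(\tau)=2,\,q_2(\tau)=\theta\}}+v(1-\mathbf 1_{\{i(\tau)=2,\,q_2(\tau)=\theta\}})$ is legitimate in Proposition~\ref{Prp_Stable1} only because there one may assume $r\le v$ (if $r>v$, then $q_1$ strictly increases whenever $q_1>0$ and also exits $q_1=0$ in mode 2, so stability fails trivially and the claim is vacuous). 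In the feedback setting that reduction is unavailable: when $r_1>v$, the state $\{i=1,\;q_1=0\}$ falls into the ``o.w.''\ clause of \eqref{Eq_s1}, giving $s_1=u_1\ge v$ there, and---unlike the mode-2 analogue that you correctly dismiss as instantaneous---this state \emph{persists} whenever $v<r_1\le u_1$, because the drift $r_1-u_1\le 0$ holds $q_1$ at zero for the whole mode-1 sojourn. So $s_1$ exceeds $v$ on a set of times of positive measure and your estimate fails.

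This is not cosmetic; it breaks your chain of inequalities. Take $r_1=u_1>v$ and $r_2<u_2$: the system is stable with $q_1\equiv0$ eventually and no spillback, and your flow-conservation step gives $\lim_t\frac1t\int_0^t s_1\,d\tau=\sfp_1u_1+\sfp_2r_2$, which for $\sfp_1$ close to $1$ exceeds $v\ge(1-\bar\pi)v+\bar\pi u_2$. Put differently, your two steps combined would prove $\sfp_1r_1+\sfp_2r_2\le v$ for every stable system, which this example refutes. (The proposition itself survives, because its left-hand side caps $r_1$ at $v$; it is your route through the \emph{uncapped} average of $s_1$ that cannot close, and the case $r_i>v$ is exactly what the $\min\{r_i,v\}$ in the statement exists for, so it cannot be waved off.) A repair needs one more idea: either first reduce to the case $r_1,r_2\le v$ by comparing with the truncated policy $\phi'=\min\{\phi,v\}$ (showing the truncated system inherits stability and has the same $\tilde{\sfp}$), after which your argument runs verbatim; or replace $s_1$ by the capped flow $\min\{s_1,v\}$ throughout and prove directly that its long-run average dominates $\sfp_1\min\{r_1,v\}+\sfp_2\min\{r_2,v\}$, which flow conservation no longer gives for free. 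In the regime $r_1,r_2\le v$ your proof is correct as written and coincides with the paper's intended argument.
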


In addition, a sufficient condition for stability of the controlled system is as follows:

\begin{prp}
\label{Prp_Stable4}
Consider the two-link system defined in \eqref{Eq_System}.
If there exist positive  constants $a_1$, $a_2$, $b_1$, $b_2$, $c$, and $d$ satisfying the linear inequalities
\begin{align*}
&2(r_1-v)+\lambda(b_2-b_1)\le-c,\\
&2(r_1-v)+a_1(v-u_1)+\lambda(a_2-a_1)\theta+\lambda(b_2-b_1)\le-c,\\
&2(r_2-v)+a_2(v-u_2)+\mu(b_1-b_2)\le-c,\\
&2(r_2-v)+a_2(v-u_2)+\mu(a_1-a_2)\theta+\mu(b_1-b_2)\le-c,\\
&2(r_2-u_2)+\mu(a_1-a_2)\theta+\mu(b_1-b_2)\le-c,\\
&d\ge a_1(r_1-v)+c\theta,\\
&d\ge a_2(r_2-u_2)+c\theta,\\
&d\ge c\theta,
\end{align*}
then, for any initial condition $(i,q)\in\I\times\Q$,
\begin{align*}
\limsup_{t\to\infty}\frac1t\int_{\tau=0}^t\E[|q(\tau)|]d\tau\le d/c.
\end{align*}
\end{prp}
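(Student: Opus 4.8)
The plan is to run the argument of Proposition~\ref{Prp_Stable2} essentially verbatim, the only change being that the inflow---and therefore the drift of $q_1$---is now mode-dependent. I would keep the same Lyapunov function $V(i,q)=q_1^2+a_iq_1q_2+b_iq_1$ and the same norm-like test function $f(i,q)=q_1+q_2$, and establish the Foster--Lyapunov drift inequality \eqref{Eq_Drift}, namely $\Ell V(i,q)\le-cf(i,q)+d$ for every $(i,q)\in\I\times\Q$; Theorem 4.3 in \cite{meyn93} then delivers the stated bound $d/c$.

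First I would record the generator of the controlled system. Since $r(t)=r_1$ in mode $1$ and $r(t)=r_2$ in mode $2$, it is obtained from \eqref{Eq_Lg} by replacing $F(1,q,r)$ with $F(1,q,r_1)$ and $F(2,q,r)$ with $F(2,q,r_2)$. Consequently the two expressions \eqref{Eq_LV1} and \eqref{Eq_LV2} for $\Ell V(1,q)$ and $\Ell V(2,q)$ carry over unchanged, provided every $r$ appearing in the mode-$1$ expression is read as $r_1$ and every $r$ in the mode-$2$ expression as $r_2$.

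Next I would repeat the same five-case decomposition of $\I\times\Q$ used for Proposition~\ref{Prp_Stable2}: mode $1$ with $q_1>0$ split at $q_2=0$ versus $0<q_2\le\theta$, mode $2$ with $q_1>0$ split at $q_2<\theta$ versus $q_2=\theta$, and the boundary $q_1=0$. The key observation is that the discharge rates \eqref{Eq_s} depend on the inflow only through threshold comparisons with $v$, which never affect the interior dynamics in any of the $q_1>0$ cases; there $s_1$ and $s_2$ still take the values $v$ or $u_i$ exactly as in the uncontrolled proof. Each case therefore collapses to the identical bound on the coefficient of $q_1$, only now written with $r_1$ in mode $1$ and $r_2$ in mode $2$. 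These are precisely the inequalities stated in the proposition: the first two govern the mode-$1$ cases, the middle three the mode-$2$ cases, and the final three---$d\ge a_1(r_1-v)+c\theta$, $d\ge a_2(r_2-u_2)+c\theta$, and $d\ge c\theta$---absorb the residual terms linear in $q_2$ together with the boundary case.

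The single point requiring care, rather than a genuine obstacle, is the boundary $q_1=0$, where $s_1$ may equal the inflow instead of $v$ or $u_i$. There the coefficient of $q_1$ is annihilated and only the term $a_i\big(r_i-s_1\big)q_2$ survives; I would check that this term either vanishes or is dominated by $a_i(r_i-u_2)q_2$ (using $u_i\ge u_2$), so that in every sub-case it is bounded above by $-cq_2+d$ uniformly in $q_2\in[0,\theta]$ through the last three inequalities. Once the drift inequality is verified on all of $\I\times\Q$, the conclusion follows from the Foster--Lyapunov theorem exactly as before.
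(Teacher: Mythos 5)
Your proposal is correct and is exactly the paper's intended argument: the paper offers no separate proof of Proposition~\ref{Prp_Stable4}, presenting it as a direct extension of Proposition~\ref{Prp_Stable2}, and your adaptation---same Lyapunov function $V(i,q)=q_1^2+a_iq_1q_2+b_iq_1$, same test function $f=q_1+q_2$, same five-case drift verification with $r$ read as $r_i$ in mode $i$---is precisely that extension. One small tightening: on the boundary $q_1=0$ in mode $1$ the surviving term is $a_1(r_1-u_1)q_2$, which should be dominated via $u_1\ge v$ (so that it is at most $a_1(r_1-v)q_2$, absorbed by $d\ge a_1(r_1-v)+c\theta$ together with $d\ge c\theta$) rather than via $u_1\ge u_2$, since the inequality list contains no bound involving $a_1(r_1-u_2)$; this mirrors, and slightly repairs, the corresponding step in the paper's own proof of Proposition~\ref{Prp_Stable2}.
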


We now compare the open-loop system and the closed-loop system. 

First, the closed-loop system can achieve better performance in the sense of smaller queues. 
Consider a two-link system with a constant inflow $r=0.625$ (called $S_1$), and a two-link system with a mode-responsive control policy such that $r_1=0.75$ and $r_2=0.5$ (called $S_2$).
Both $S_1$ and $S_2$ have parameters given in \eqref{Eq_Parameters}.
By Propositions~\ref{Prp_Stable2} and \ref{Prp_Stable4}, both $S_1$ and $S_2$ are stable. However, the long-time average queue length in $S_1$ is positive, while that in $S_2$ is zero.
Hence, mode-responsive control reduces the queueing delay in the two-link system.

Second, the performance of $S_2$ depends on the estimate of the mode.
In reality, the mode can be observed from real-time measurement of traffic condition or surveillance of traffic incidents.
If the estimate of the mode is accurate, then $S_2$ has a smaller queueing delay. However, if the measurement of the mode is inaccurate, then the performance of $S_2$ is not necessarily better than $S_1$. For example, if the sensor fails for some reason and never reports capacity reduction, then the system operator finds the system to be in mode 1 for all time and thus sends a constant inflow of 0.75 to the system; by Proposition~\ref{Prp_Stable1}, this decision leads to instability.
Such vulnerability does not exist in $S_1$.
Development of practically relevant models for reliability/security sensor failures \cite{hoh08,laszka16} is part of our ongoing work.
\subsection{Extension to merges and splits}
\label{Sec_Net}

Now we discuss how our approach can be extended from the two-link system to merges and splits.
This discussion is helpful to understand the behavior of more general fluid queueing networks with finite buffer sizes.

\begin{figure}[hbt]
\centering
\subfigure[A merge.]{
\centering
\includegraphics[width=0.31\textwidth]{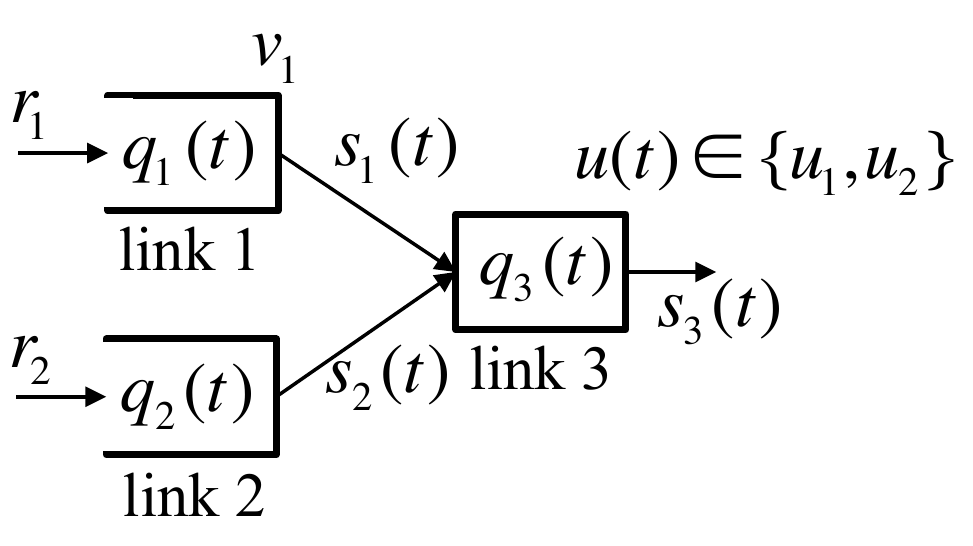}
\label{Fig_Merge}
}
\subfigure[A split.]{
\centering
\includegraphics[width=0.31\textwidth]{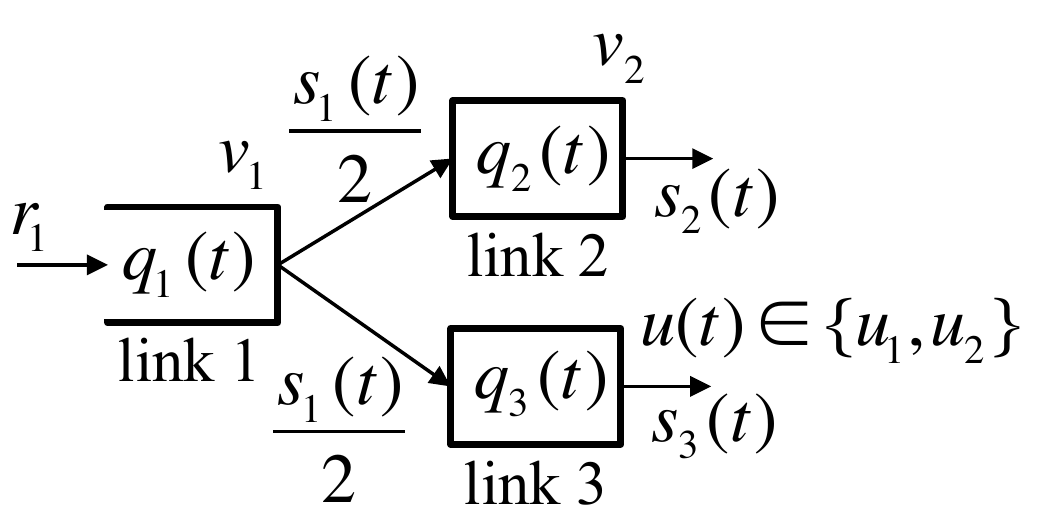}
\label{Fig_Split}
}
\caption{Tandem fluid queuing models for merges/splits.}
\end{figure}

\subsubsection{Merges}
Consider a merge, i.e. three fluid queueing links connected as in Figure~\ref{Fig_Merge}. Links 1 and 2 have constant capacities and infinite buffer size. Link 3 has a capacity switching between $u_1$ and $u_2$, and a buffer size of $\theta$.
The state space is $\Q_m=[0,\infty)^2\times[0,\theta]$.
Furthermore, we assume that flow from link 1 is prioritized over link 2.

The necessary condition can be extended by considering link 3 isolated from the merge system, and compute a lower bound for spillback probability.

To extend the sufficient condition, we consider the following Lyapunov function:
\begin{align*}
V_m(i,q)=q_1^2+a_{1,i}q_1q_3+b_{1,i}q_1
+q_2^2+a_{2,i}q_2q_3+b_{2,i}q_2,
\end{align*}
where $a_{1,i}$, $a_{2,i}$, $b_{1,i}$, and $b_{2,i}$ are positive constants.
Then, if one can find positive constants $c$ and $d$ such that
\begin{align}
\Ell V_m(i,q)\le-c(q_1+q_2+q_3)+d,
\label{Eq_Drift2}
\end{align}
then the merge system is stable. Similar to the case of the single-link system, \eqref{Eq_Drift2} can be translated to a set of linear inequalities, which are not hard to solve.

The behavior of a merge is similar to that of the two-link system. However, an important distinct property of a merge is that stability not only depends on the sum of the inflows, but also how the inflows are distributed over the upstream links.
To see this, suppose that
\begin{align*}
r_{(1)}+r_{(2)}=R<u_2,\;
r_{(1)}>v_1,
\end{align*}
for some $R>0$.
Then, the system is unstable in that $q_1$ grows unboundedly.
However, suppose that
\begin{align*}
r_{(1)}+r_{(2)}=R<u_2,\;
r_{(1)}\le v_1,\;
r_{(2)}\le v_2.
\end{align*}
Then, although the total inflow is unchanged, the system becomes stable.


\subsubsection{Splits}
Consider a split, i.e. three fluid queueing links connected as in Figure~\ref{Fig_Merge}. For ease of presentation, we assume that links 1 and 2 have constant capacities and infinite buffer size, while link 3 has a capacity switching between $u_1$ and $u_2$, and a buffer size of $\theta$.
We assume that outflow from link 1 is evenly distributed to links 2 and 3; i.e. $r/2$ amount of traffic is assigned to the route consisting of links 1 and 2 (resp. 3), which we call route $\{1,2\}$ (resp. route $\{1,3\}$) for short).

The necessary condition can be extended by considering link 3 isolated from the merge system, and compute a lower bound for spillback probability.

To extend the sufficient condition, we consider the following Lyapunov function:
\begin{align*}
V_s(i,q)=q_1^2+(a_{i}q_2+a_i'q_3)q_1+b_{i}q_1
\end{align*}
where $a_{i}$, $a_{i}'$, and $b_{i}$ are positive constants.
Then, if one can find positive constants $c$ and $d$ such that
\begin{align}
\Ell V_s(i,q)\le-c(q_1+q_2+q_3)+d,
\label{Eq_Drift3}
\end{align}
then the split system is stable. Similar to the case of the single-link system, \eqref{Eq_Drift3} can be translated to a set of linear inequalities, which are not hard to solve.

The most important property of a split is that congestion in one downstream link may block traffic into the other downstream link.
To see this, suppose that
\begin{align*}
&r=1.6,\;v_1=2,\;v_2=1,\\
&u_1=1,\;u_2=0.5,\;
\lambda=\mu=1,\;\theta=0.
\end{align*}
That is, 0.8 amount of traffic is assigned to each of routes $\{1,2\}$ and $\{1,3\}$, which is strictly less than the nominal/average capacity of link 2 and link 3, respectively.
However, the system is unstable.
To see this, note that, for 50\% of the time, the split experiences spillback. During spillback, we have $s_1=2u_2=2(0.5)=1$. Hence, the split system is unstable in that $q_1$ grows unboundedly a.s.
\newpage
\section{Concluding Remarks}
\label{Sec_Conclude}

In this article, we present an analysis of a two-link fluid queueing system with both stochastic capacity and spillback. We derive a necessary condition and a sufficient condition for the stability of the two-link system. The necessary condition implies that the two-link system is not necessarily stable even if the inflow is strictly less than the average capacity of each link. The sufficient conditions provide stability guarantee to a set of inflow values. Using these results, we analyze the throughput of the two-link system. We also discuss how our analysis can be extended to feedback-controlled systems, and to more general networks.

\newpage
\section*{Acknowledgment}
This work was supported by the Singapore NRF Future Urban Mobility project, NSF CNS-1239054 CPS Frontiers: \underline{F}oundations \underline{O}f \underline{R}esilient \underline{C}yb\underline{E}r-Physical \underline{S}ystems (FORCES), NSF CAREER Award CNS-1453126, AFRL Lablet-Secure and Resilient Cyber-Physical Systems, and NYU Tandon School of Engineering start-up funding.
We also appreciate the valuable inputs from the reviewers.
\newpage

\bibliographystyle{trb}
\bibliography{Bibliography}


\end{document}